\newtheorem{theo}{Theorem}
\newtheorem{conj}[theo]{Conjecture}
\newtheorem{coro}[theo]{Corollary}
\newtheorem{lemm}[theo]{Lemma}
\newcommand{\eqspace}{\ensuremath{\mathrel{\phantom{=}}}}
\newcommand{\cdotcup}{\mathrel{\mathaccent\cdot\cup}}
\title{The Merrifield-Simmons conjecture holds for bipartite graphs}
\author{Martin Trinks\thanks{trinks@hs-mittweida.de, Hochschule Mittweida, University of Applied Sciences, Faculty Mathematics / Sciences / Computer Science, Technikumplatz 17, 09648 Mittweida, Germany}}
\date{\today}
\begin{document}

\maketitle

{\center\setlength{\parindent}{0pt}
\begin{minipage}[c]{5.7cm}
The author receives a grant from the European Social Fund (ESF) of the European Union (EU).
\end{minipage}
\hspace{0.4cm}
\begin{minipage}[c]{5.7cm}
\centering
\includegraphics[width=3cm]{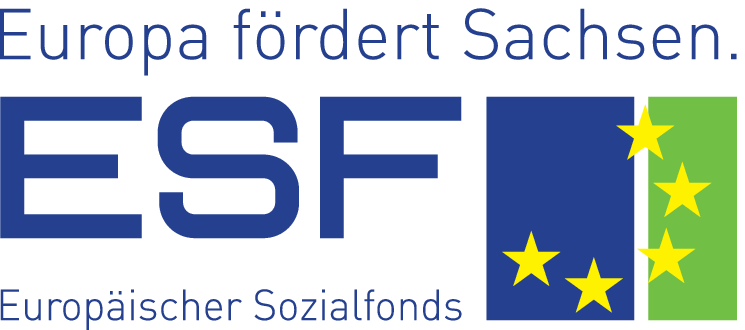}
\hspace{0.25cm}
\includegraphics[width=2cm]{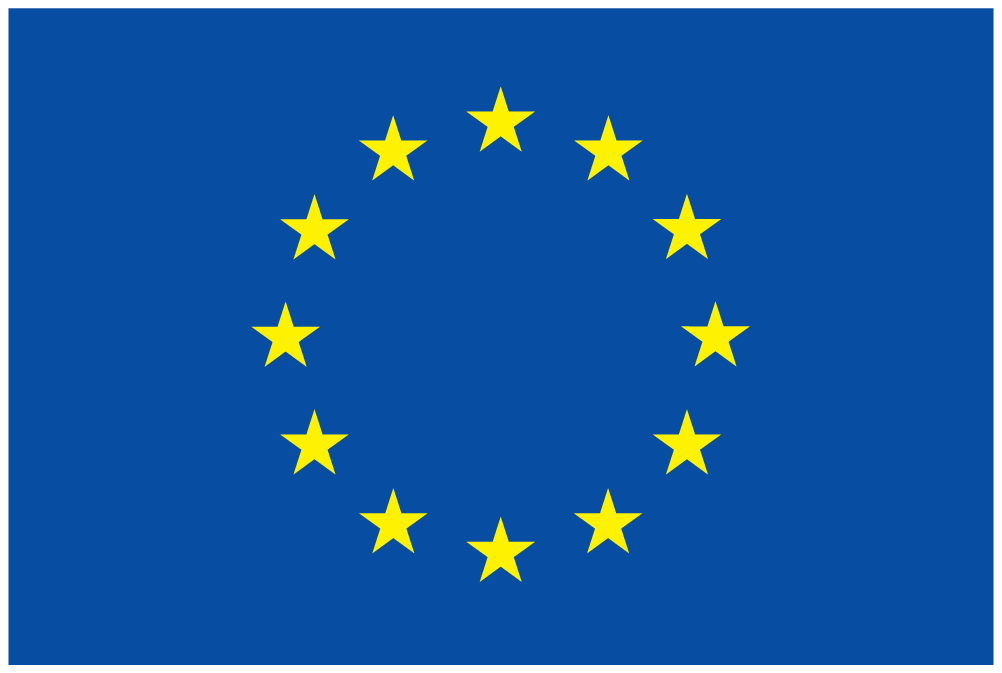}
\end{minipage}
}

\begin{abstract}
Let $G = (V, E)$ be a graph and $\sigma(G)$ the number of independent (vertex) sets in $G$. Then the Merrifield-Simmons conjecture states that the sign of the term $\sigma(G_{-u}) \cdot \sigma(G_{-v}) - \sigma(G) \cdot \sigma(G_{-u-v})$ only depends on the parity of the distance of the vertices $u, v \in V$ in $G$. We prove that the conjecture holds for bipartite graphs by considering a generalization of the term, where vertex subsets instead of vertices are deleted.
\end{abstract}

\section{Introduction}

Let $G = (V, E)$ be a graph and $u$, $v \in V$ two vertices of $G$. We define the term $\Delta(G, u, v)$ as 
\begin{align}
\Delta(G, u, v) = \sigma(G_{-u}) \cdot \sigma(G_{-v}) - \sigma(G) \cdot \sigma(G_{-u-v}), \label{eq:delta_def}
\end{align}
where \(\sigma(G)\) is the number of independent (vertex) sets in $G$, i.e. the number of vertex subsets \(W \subseteq V\) such that no two vertices of \(W\) are adjacent, and where \(G_{-w}\) is the graph with the vertex \(w\) and its incident edges removed.

Merrifield and Simmons \cite{merrifield1989} note (without proof), that the sign of $\Delta(G, u, v)$  alternates with respect to $d_G(u, v)$, the distance of the vertices $u$ and $v$ in the graph $G$. Although they probably consider only graphs representing molecular structures (bipartite graphs \cite{gutman1990}), this property is also of interest for arbitrary graphs. Hence it is generalized to determine for which graphs and vertices the following conjecture holds:

\begin{conj}[Merrifield-Simmons conjecture]
\label{conj:msc}
Let $G = (V, E)$ be a graph and $u, v \in V$ two vertices of $G$. Then
\begin{align}
\Delta(G, u, v) &= \sigma(G_{-u}) \cdot \sigma(G_{-v}) - \sigma(G) \cdot \sigma(G_{-u-v}) \notag \\
&\begin{cases}
< 0 & \text{if } d_G(u, v) \text{ is even,} \\
> 0 & \text{if } d_G(u, v) \text{ is odd.}
\end{cases} \label{eq:conj_msc}
\end{align}
\end{conj}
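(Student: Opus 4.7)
My plan for attacking Conjecture~\ref{conj:msc} is to extend the two-vertex quantity $\Delta(G,u,v)$ to a vertex-subset analogue
\[
\Delta(G, U, W) = \sigma(G_{-U}) \, \sigma(G_{-W}) - \sigma(G) \, \sigma(G_{-U-W}),
\]
and to prove a refined signed inequality for this generalised quantity, of which the case $U = \{u\}$, $W = \{v\}$ recovers \eqref{eq:conj_msc}, by strong induction on $|V(G)| + |U| + |W|$. The generalisation is essentially forced by the only local identity available for $\sigma$, namely $\sigma(G) = \sigma(G_{-w}) + \sigma(G_{-w - N(w)})$: applying this identity to any of the four factors appearing in $\Delta(G,u,v)$ immediately introduces the closed neighbourhood $N[w]$ as a deleted subset, so the induction cannot remain within single-vertex deletions.

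The key technical step is to pick a vertex $w$ internal to a shortest $u$-$v$-path, expand each of $\sigma(G)$, $\sigma(G_{-u})$, $\sigma(G_{-v})$ and $\sigma(G_{-u-v})$ via the splitting identity at $w$, and collect the eight resulting products into an expression of the shape
\[
\Delta(G, u, v) = \pm \Delta\bigl(G_{-w}, U_1, W_1\bigr) \pm \Delta\bigl(G_{-w - N(w)}, U_2, W_2\bigr) + R,
\]
in which $U_i, W_i$ are explicit subsets built from $\{u, v\} \cup N(w)$ and $R$ is a structurally non-negative or non-positive remainder. The base cases $d_G(u,v) \in \{0, 1\}$ are settled directly: one obtains $\Delta(G, u, u) = -\sigma(G_{-u}) \sigma(G_{-u-N(u)}) < 0$ after a single application of the splitting identity, and for $uv \in E$ a similarly short computation yields $\Delta(G, u, v) = \sigma(G_{-u-N(u)}) \sigma(G_{-u-v-(N(v) \setminus \{u\})}) > 0$.

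The main obstacle, and the reason the full conjecture is strictly harder than the bipartite special case that the title singles out, is that the induction only delivers the predicted sign when the parity of the $u$-$v$-distance behaves predictably under deletion of $N[w]$. For a bipartite graph $G$ with bipartition $V = A \cdotcup B$ the parity of $d_G(u,v)$ records whether $u$ and $v$ lie in the same class, and since $N(w)$ is contained entirely in the class opposite to $w$, this parity descends unchanged to $G_{-w}$ and $G_{-w - N(w)}$; the induction then closes. For a non-bipartite $G$, an odd cycle through $w$ can cause $d_{G_{-w-N(w)}}(u,v)$ to have the \emph{opposite} parity of $d_G(u,v)$, so the inductive hypothesis applied to the sub-$\Delta$-terms can return the wrong sign. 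I therefore expect the bipartite case to close via the programme above, while the full conjecture as stated would demand an additional ingredient — for example a sharper weighted subset inequality whose remainder absorbs the parity-incompatible contributions, or a structural reduction eliminating odd cycles meeting the $u$-$v$-geodesic — and this is where the hardest work would lie.
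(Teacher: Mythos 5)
Your base cases check out (both the $d=0$ and the adjacent-vertex computations are correct), and your instinct to generalize $\Delta$ to vertex subsets and induct is exactly the right one — it is also the paper's route. But the heart of your plan, the identity $\Delta(G,u,v)=\pm\Delta(G_{-w},U_1,W_1)\pm\Delta(G_{-w-N(w)},U_2,W_2)+R$ obtained by pivoting at an internal vertex $w$ of a shortest $u$-$v$-path with a sign-definite remainder $R$, is asserted rather than established, and it does not come out that way. If you expand all four $\sigma$-factors at $w$ you do get $\Delta(G_{-w},u,v)+\Delta(G_{-w-N(w)},u,v)$ plus cross terms, but those cross terms are themselves subset-$\Delta$-type quantities (e.g.\ of the form $\sigma(G_{-w-u})\sigma(G_{-w-v-N(w)})-\sigma(G_{-w})\sigma(G_{-w-u-v-N(w)})$, i.e.\ $\Delta(G_{-w},\{u\},\{v\}\cup N(w))$, and a second pair that is not even a $\Delta$ of a single graph), so the "remainder" is precisely as hard as the quantities you are inducting on. Worse, your proposed inductive invariant — the parity of $d_G$ between the two deleted sets — is not strong enough to control these terms: once the deleted objects are sets such as $N(w)$, the sign of $\Delta$ is governed by the parity of \emph{all} connecting paths, not of a shortest one, and this is exactly where the non-bipartite counterexamples live.

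The paper resolves both difficulties at once. It proves a generalized recurrence (Theorem~\ref{theo:sigma_prop_5}), $\sigma(G)=\sum_{W\subseteq A,\ W \text{ independent}}\sigma(G_{-A-N_G(W)})$, and pivots not at an auxiliary vertex but at the whole set $A$: expanding $\sigma(G)$ and $\sigma(G_{-B})$ this way yields, with no leftover remainder, $\Delta(G,A,B)=-\sum_{W}\Delta(G_{-A},N_G(W),B)$ when $A\cap B=\emptyset$ and no $A$-$B$-edge exists (and the analogous inequality in the odd case, using $N_{G_{-B}}(W)\subseteq N_G(W)$; a separate short argument handles $A\cap B\neq\emptyset$). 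The inductive hypothesis is Theorem~\ref{theo:gmsc}: the sign is negative/zero/positive according as $P(G,A,B)$ — the set of all $A$-$B$-paths — is even, empty ("infinite"), or odd; Lemma~\ref{lemm:gmsc_prop} shows this all-paths parity flips (or becomes infinite) when passing from $(G,A,B)$ to $(G_{-A},N_G(W),B)$, which is what closes the induction, and the infinite case (Corollary~\ref{coro:delta_infty}) covers the disconnection that your sketch leaves unaddressed. Bipartiteness enters only at the very end (Corollary~\ref{coro:msc_biparite_graph}): in a bipartite graph every $u$-$v$-path has the parity of $d_G(u,v)$, so the single-vertex case of the theorem is Conjecture~\ref{conj:msc}. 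So the missing ideas in your proposal are precisely the correct inductive invariant (parity of all connecting paths rather than distance parity) and the set-pivot expansion that produces a clean sum of sub-$\Delta$ terms instead of an uncontrolled remainder; note also that the conjecture as literally stated for arbitrary graphs is false, so no argument, including yours, can avoid restricting to a parity hypothesis of this kind.
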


In fact the authors note this property for a related term, the propagator $\gamma_{u, v}(G)$, such that for nonadjacent vertices $u$ and $v$ the above conjecture equals the original one and otherwise is a necessary condition \cite{gutman1990}.

Gutman \cite{gutman1990} mentions examples of non-bipartite graphs and nonadjacent vertices, showing that the Merrifield-Simmons conjecture (Conjecture \ref{conj:msc}) is not true in general, but proves that it holds in arbitrary graphs for adjacent vertices and restates the conjecture, that it is valid in bipartite graphs for arbitrary vertices. The same author verifies the conjecture for trees by applying a more general theorem \cite{gutman1991}. Li \cite{li1996} gives a proof in case of cycles and some small other graph classes. That the conjecture holds for bipartite unicyclic graphs was shown by Wang, Li, and Gutman \cite{wang2001}.

For a graph \(G = (V, E)\) with a vertex \(v \in V\) and a vertex subset \(W \subseteq V\) we use the following notation: By \(G_{-W}\) we denote the graph \(G\) where all vertices \(v \in W\) are deleted, that means these vertices and their incident edges are removed. The open neighborhood of $W$ is denoted by \(N_{G}(W)\), i.e. the set of all vertices adjacent to a vertex \(v \in W\). If \(W = \{v\}\) then we write \(G_{-v}\) and \(N_{G}(v)\) instead of \(G_{-\{v\}}\) and \(N_{G}(\{v\})\), respectively. \(G^1 \cdotcup G^2\) is the disjoint union of the graphs \(G^1\) and \(G^2\), i.e. the union of disjoint copies of both graphs. For all other notation we refer to \cite{diestel2005}.

For the number of independent sets \(\sigma(G)\) we use the following basic properties \cite{merrifield1981}: The number \(\sigma(G)\) is multiplicative in components, that means if \(G\) is the disjoint union of the graphs \(G_1\) and \(G_2\), then
\begin{align}
\sigma(G) &= \sigma(G^1 \cdotcup G^2) = \sigma(G^1) \cdot \sigma(G^2). \label{eq:sigma_prop_4}
\end{align}
Furthermore, it satisfies for all vertices $v \in V$ the recurrence relation
\begin{align}
\sigma(G) &= \sigma(G_{-v}) + \sigma(G_{-v-N_G(v)}). \label{eq:sigma_prop_1}
\end{align}
Consequently, by removing a vertex or a vertex subset from a graph, the number of independent sets of a graph decreases. Each graph, particularly the empty graph, has at least one independent set, namely the empty set.

We generalize the recurrence relation given in Equation \eqref{eq:sigma_prop_1} as follows:
\begin{theo}
\label{theo:sigma_prop_5}
Let \(G = (V, E)\) be a graph and \(U \subseteq V\) a vertex subset of \(G\). Then
\begin{align}
& \sigma(G) = \eqspace \sum_{\mathclap{\substack{W \subseteq U \\ W \text{ is independent}}}}{\sigma(G_{-U-N_{G}(W)})}. \label{eq:theo_sigma_prop_5}
\end{align}
\end{theo}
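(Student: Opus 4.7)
My plan is to give a direct combinatorial proof by partitioning the independent sets of $G$ according to their intersection with $U$. Specifically, every independent set $I$ of $G$ determines an independent subset $W = I \cap U$ of $U$, and conversely this correspondence should match the summation on the right-hand side of Equation~\eqref{eq:theo_sigma_prop_5} term by term. The conjectured bijection is $I \mapsto I \setminus W$, mapping independent sets $I$ of $G$ with $I \cap U = W$ to independent sets of $G_{-U-N_G(W)}$.

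First I would fix an independent subset $W \subseteq U$ and verify that the assignment $I \mapsto I \setminus W$ is well-defined: for any independent $I$ of $G$ with $I \cap U = W$, the set $I \setminus W$ avoids $U$ (because $I \cap U = W$) and avoids $N_G(W)$ (because $I$ is independent and already contains $W$), so it is an independent set of $G_{-U-N_G(W)}$. Conversely, for any independent set $I'$ of $G_{-U-N_G(W)}$, I would set $I = I' \cup W$ and check that (i) $I$ is independent in $G$ — which requires no edges between $I'$ and $W$, guaranteed because $I'$ lives in the complement of $N_G(W)$, together with independence of $I'$ and of $W$; and (ii) $I \cap U = W$ exactly — the inclusion $W \subseteq I \cap U$ is clear, and the reverse follows since $I' \subseteq V \setminus U$.

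Summing over all independent subsets $W \subseteq U$ then covers every independent set of $G$ exactly once, yielding Equation~\eqref{eq:theo_sigma_prop_5}. The only subtlety worth being careful about is the second part of the bijection check: one must use the removal of $N_G(W)$ (not just of $U$) to exclude edges between $W$ and $I \setminus W$, and one must use the removal of all of $U$ (not just of $W$) to guarantee $I \cap U = W$ rather than merely $I \cap U \supseteq W$. As a sanity check, the case $U = \{v\}$ reduces to $\sigma(G) = \sigma(G_{-v}) + \sigma(G_{-v-N_G(v)})$, recovering Equation~\eqref{eq:sigma_prop_1}; alternatively, the theorem could be proved by induction on $|U|$ using exactly that recurrence, but the partition argument above is more transparent.
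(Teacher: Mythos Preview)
Your argument is correct. The bijection you describe really does match, for each independent \(W \subseteq U\), the independent sets \(I\) of \(G\) with \(I \cap U = W\) and the independent sets of \(G_{-U-N_G(W)}\); the two subtleties you flag (removing all of \(U\), not just \(W\), to force \(I \cap U = W\); and removing \(N_G(W)\) to forbid edges between \(W\) and \(I \setminus W\)) are exactly the points that need checking, and you handle them.

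The paper takes a different route: it proves the identity by induction on \(|U|\), using the single-vertex recurrence \(\sigma(G) = \sigma(G_{-v}) + \sigma(G_{-v-N_G(v)})\) of Equation~\eqref{eq:sigma_prop_1} both as the base case and in the inductive step (splitting the independent subsets \(W \subseteq U' \cup \{u\}\) according to whether \(u \in W\)). Your partition argument is more direct and conceptually cleaner, giving a one-shot explanation of why each summand appears; the paper's induction has the virtue of deriving the result purely from the basic recurrence already stated, without introducing a new bijection, which keeps the logical dependencies minimal. The two proofs are essentially the unrolled and rolled-up versions of the same counting, and you correctly note this at the end of your proposal.
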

\begin{proof}
We prove the statement by induction on the number of vertices in \(U \subseteq V\). As basic step we consider the case \(|U| = 1\), that means \(U = \{u\}\). By Equation \eqref{eq:sigma_prop_1} we have
\begin{align*}
\sigma(G) &= \sigma(G_{-u}) + \sigma(G_{-u-N_{G}(u)}), \\
&= \eqspace \sum_{\mathclap{\substack{W \subseteq U = \{u\} \\ W \text{ is independent}}}}{\sigma(G_{-U-N_{G}(W)})}.
\end{align*}
As induction hypothesis we assume that the theorem holds for all sets \(U\) with \(|U| \leq n\). Then for a vertex subset \(U = U' \cup \{u\}\) with \(|U'| = n\) and \(u \notin U'\) we get by Equation \eqref{eq:sigma_prop_1} and by application of the induction hypothesis:
\begin{align*}
\sigma(G) &= \sigma(G_{-u}) + \sigma(G_{-u-N_{G}(u)}) \\
&= \eqspace \sum_{\mathclap{\substack{W' \subseteq U' \\ W' \text{ is independent}}}}{\sigma(G_{-u-U'-N_{G}(W')})} + \eqspace \sum_{\mathclap{\substack{W' \subseteq U' \setminus N_{G}(u) \\ W' \text{ is independent}}}}{\sigma(G_{-u-N_{G}(u)-U'-N_{G}(W')})} \\
&= \eqspace \sum_{\mathclap{\substack{W \subseteq U \\ W \text{ is independent} \\ u \notin W}}}{\sigma(G_{-U-N_{G}(W)})} + \eqspace \sum_{\mathclap{\substack{W \subseteq U \\ W \text{ is independent} \\ u \in W}}}{\sigma(G_{-U-N_{G}(W)})} \\  
&= \eqspace \sum_{\mathclap{\substack{W \subseteq U \\ W \text{ is independent}}}}{\sigma(G_{-U-N_{G}(W)})},
\end{align*}
and thus it holds also for \(|U| = n+1\).
\end{proof}

\section{A generalization for vertex subsets}

We introduce a generalization of the term $\Delta(G, u, v)$, where instead of vertices we delete vertex subsets. For a graph $G = (V, E)$ and two vertex subsets $A, B \subseteq V$ of $G$, we define the term $\Delta(G, A, B)$ as
\begin{align}
\Delta(G, A, B) = \sigma(G_{-A}) \cdot \sigma(G_{-B}) - \sigma(G) \cdot \sigma(G_{-A-B}).
\end{align}
Obviously, $\Delta(G, A, B)$ is symmetric with respect to the vertex sets $A$ and $B$, i.e.
\begin{align}
\Delta(G, A, B) = \Delta(G, B, A). \label{eq:gmsc_sym}
\end{align}

We investigate the dependence of the sign of $\Delta(G, A, B)$ on the length of all $A$-$B$-paths. 

A path $P = (v_1, \ldots, v_k)$ of $G$ is an $A$-$B$-path, if $V(P) \cap A = \{v_1\}$ and $V(P) \cap B = \{v_k\}$, where $V(P)$ is the set of vertices of $P$ \cite[Section 1.3]{diestel2005}. By $P(G, A, B)$ we denote the set of all $A$-$B$-paths in $G$. The length of an $A$-$B$-path $P$ is the number of edges in $P$, that means $|V(P)| - 1$. We say $P(G, A, B)$ is even (odd) if the length of each path $P \in P(G, A, B)$ is even (odd) and $P(G, A, B)$ is infinite, if there is no $A$-$B$-path in $G$ (the length of each $P \in P(G, A, B)$ is infinite).

Hence we want to generalize the Merrifield-Simmons conjecture by asking for which graphs and which vertex sets the following conjecture holds:
\begin{conj}[generalized Merrifield-Simmons conjecture]
\label{conj:gmsc}
Let $G=(V, E)$ be a graph and $A, B \subseteq V$ two vertex subsets of $G$. Then
\begin{align}
\Delta(G, A, B) &= \sigma(G_{-A}) \cdot \sigma(G_{-B}) - \sigma(G) \cdot \sigma(G_{-A-B}) \notag \\
&\begin{cases}
< 0 & \text{if } P(G, A, B) \text{ is even,} \\
= 0 & \text{if } P(G, A, B) \text{ is infinite,} \\
> 0 & \text{if } P(G, A, B) \text{ is odd.}
\end{cases} \label{eq:conj_gmsc}
\end{align}
\end{conj}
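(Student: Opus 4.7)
The plan is to attack all three cases of Conjecture~\ref{conj:gmsc} using Theorem~\ref{theo:sigma_prop_5} as the main technical tool, proceeding by induction on a measure such as $|V(G)| + |A| + |B|$, and flagging in advance which step will restrict the argument.

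First I would dispose of the infinite case. If there is no $A$-$B$-path in $G$, then the connected components of $G$ meeting $A$ are disjoint from those meeting $B$. Writing $G = G^A \cdotcup G^B \cdotcup G^0$, where $G^A$ collects the components meeting $A$, $G^B$ the components meeting $B$, and $G^0$ the remaining components, the multiplicativity of $\sigma$ (Equation~\eqref{eq:sigma_prop_4}) yields
\begin{align*}
\sigma(G)\cdot\sigma(G_{-A-B}) &= \sigma(G^A)\sigma(G^B)\sigma(G^0)\cdot\sigma((G^A)_{-A})\sigma((G^B)_{-B})\sigma(G^0) \\
&= \sigma(G_{-A})\cdot\sigma(G_{-B}),
\end{align*}
so $\Delta(G,A,B)=0$, as required.

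For the even and odd cases I would reduce first to the situation $A \cap B = \emptyset$ (the overlap contributes identically to all four $\sigma$-terms) and then apply Theorem~\ref{theo:sigma_prop_5} to both $\sigma(G)$ and $\sigma(G_{-B})$ with a common index set $U \subseteq A$. This expands each of the four $\sigma$-terms in $\Delta(G,A,B)$ into a sum indexed by independent subsets $W \subseteq U$, the summands being values of $\sigma$ on strictly smaller graphs obtained by deleting $U \cup N_G(W)$. The goal is to regroup the resulting double sums so that $\Delta(G,A,B)$ appears as a signed combination of $\Delta$-values on smaller instances of the form $\Delta(G_{-U-N_G(W)}, A', B)$ for suitable reduced sets $A'$. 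If each reduced instance $P(G_{-U-N_G(W)}, A', B)$ inherits the parity of the original $P(G,A,B)$, the inductive hypothesis supplies the signs demanded by~\eqref{eq:conj_gmsc}, and summing closes the induction.

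The main obstacle will be controlling path parity under the deletions $U \cup N_G(W)$. In an arbitrary graph, removing a vertex subset can both destroy some $A$-$B$-paths and create new shortest paths of opposite parity, so the reduced path set need not be uniformly even or odd even when the original one is. In a bipartite graph this difficulty vanishes: the parity of every $A$-$B$-path is determined by the two sides of the bipartition containing its endpoints, and this parity is preserved under vertex deletion. This structural fact is exactly what allows the bookkeeping of signs in the inductive step to succeed, and Gutman's non-bipartite counterexamples mentioned in the introduction show that no corresponding control is available in general. Consequently I expect the above scheme to yield Conjecture~\ref{conj:gmsc} precisely for bipartite $G$, which is what the title of the paper announces.
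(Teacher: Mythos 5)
Your overall scheme --- induction on the graph, expansion of $\sigma(G)$ and $\sigma(G_{-B})$ via Theorem~\ref{theo:sigma_prop_5}, and regrouping into $\Delta$-values of smaller instances --- is indeed the shape of the paper's proof of this statement (Theorem~\ref{theo:gmsc}), and your handling of the infinite case is fine. But two points are genuinely off. First, the claim that the overlap $A \cap B$ ``contributes identically to all four $\sigma$-terms'' is false: writing $C = A \cap B$, the three terms $\sigma(G_{-A})$, $\sigma(G_{-B})$, $\sigma(G_{-A-B})$ have $C$ deleted but $\sigma(G)$ does not, so nothing cancels. The paper treats $C \neq \emptyset$ (which can only occur in the even case, since a common vertex is an $A$-$B$-path of length $0$) by expanding $\sigma(G)$ with Theorem~\ref{theo:sigma_prop_5} over $U = C$, splitting off the $W = \emptyset$ term, and obtaining the strict inequality $\Delta(G, A, B) < \Delta(G_{-C}, A \setminus C, B \setminus C) \leq 0$ from the induction hypothesis.

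Second, and more importantly, your predicted conclusion misreads the statement. You expect the reduced instances to \emph{inherit} the parity of $P(G, A, B)$, argue that this can only be controlled for bipartite $G$, and conclude the scheme works ``precisely for bipartite $G$''. In fact Conjecture~\ref{conj:gmsc} as stated holds for \emph{all} graphs, and the paper's induction never uses bipartiteness. After regrouping (with $U = A$), the smaller instances are $\Delta(G_{-A}, N_{G}(W), B)$ preceded by a global minus sign, and their path parity does not persist but \emph{flips} (Lemma~\ref{lemm:gmsc_prop}): any $N_{G}(W)$-$B$-path in $G_{-A}$ extends by a single edge $a x$ with $a \in W$ to an $A$-$B$-path of $G$, so if all $A$-$B$-paths are even (odd), then every surviving $N_{G}(W)$-$B$-path is odd (even), or there is none; and $W = \{a\}$ with $a$ on an $A$-$B$-path guarantees at least one strictly signed term. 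Your worry that deletions ``create new shortest paths of opposite parity'' conflates the distance-based Conjecture~\ref{conj:msc} with the path-set-based Conjecture~\ref{conj:gmsc}: deleting vertices never creates paths, and the hypothesis that \emph{every} $A$-$B$-path has the stated parity already supplies all the control needed, which is exactly why Gutman's non-bipartite counterexamples (where $u$-$v$-paths of both parities coexist) do not contradict the generalized statement. Bipartiteness enters only afterwards, in Corollary~\ref{coro:msc_biparite_graph}, to convert ``$d_G(u,v)$ even/odd'' into ``all $u$-$v$-paths even/odd''. (A further detail you would need in the odd case: $N_{G_{-B}}(W) \subseteq N_{G}(W)$ gives $\sigma(G_{-B-A-N_{G_{-B}}(W)}) \geq \sigma(G_{-A-B-N_{G}(W)})$, an inequality pointing the right way; in the even case $A$ and $B$ are nonadjacent, so the two neighborhoods coincide and one gets equality.)
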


As for $\Delta(G, u, v)$ \cite{wang2001}, the sign of $\Delta(G, A, B)$ depends only on the connected components including at least one vertex of both vertex subsets $A$ and $B$.

\begin{theo}
\label{theo:delta_GAB}
Let $G = (V, E)$ be a graph and $A, B \subseteq V$ two vertex subsets of $G$. Then
\begin{align}
\Delta(G, A, B) = \sigma(G^{*}_{-A} \cdotcup G^{*}_{-B}) \cdot \Delta(G^{AB}, A, B),
\end{align}
where $G = G^{*} \cdotcup G^{AB}$ and $G^{AB}$ is the union of all connected components including at least one vertex of both vertex subsets $A$ and $B$.
\end{theo}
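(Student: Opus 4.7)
The plan is to exploit the multiplicativity of $\sigma$ over disjoint unions (Equation \eqref{eq:sigma_prop_4}) together with a finer decomposition of $G^{*}$. By definition of $G^{AB}$, every connected component of $G^{*}$ meets at most one of the vertex subsets $A$ and $B$, so I would split $G^{*}$ further as $G^{*} = G^{A} \cdotcup G^{B} \cdotcup G^{0}$, where $G^{A}$ collects the components meeting $A$ (but not $B$), $G^{B}$ collects those meeting $B$ (but not $A$), and $G^{0}$ collects the components meeting neither. The point of this split is that deleting $A$ from $G^{*}$ only changes $G^{A}$, and deleting $B$ only changes $G^{B}$.

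Using Equation \eqref{eq:sigma_prop_4}, I would then expand each of the four factors appearing in $\Delta(G, A, B)$ along the decomposition $G = G^{A} \cdotcup G^{B} \cdotcup G^{0} \cdotcup G^{AB}$. Since $A \subseteq V(G^{A}) \cup V(G^{AB})$ and $B \subseteq V(G^{B}) \cup V(G^{AB})$, this yields
\begin{align*}
\sigma(G)          &= \sigma(G^{A})\,\sigma(G^{B})\,\sigma(G^{0})\,\sigma(G^{AB}), \\
\sigma(G_{-A})     &= \sigma(G^{A}_{-A})\,\sigma(G^{B})\,\sigma(G^{0})\,\sigma(G^{AB}_{-A}), \\
\sigma(G_{-B})     &= \sigma(G^{A})\,\sigma(G^{B}_{-B})\,\sigma(G^{0})\,\sigma(G^{AB}_{-B}), \\
\sigma(G_{-A-B})   &= \sigma(G^{A}_{-A})\,\sigma(G^{B}_{-B})\,\sigma(G^{0})\,\sigma(G^{AB}_{-A-B}).
\end{align*}

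Substituting these expressions into the definition of $\Delta(G, A, B)$, the common prefactor $\sigma(G^{A})\,\sigma(G^{A}_{-A})\,\sigma(G^{B})\,\sigma(G^{B}_{-B})\,\sigma(G^{0})^{2}$ factors out of both summands, leaving the bracket $\sigma(G^{AB}_{-A})\,\sigma(G^{AB}_{-B}) - \sigma(G^{AB})\,\sigma(G^{AB}_{-A-B}) = \Delta(G^{AB}, A, B)$. It remains to recognize the common prefactor as $\sigma(G^{*}_{-A} \cdotcup G^{*}_{-B})$. Applying Equation \eqref{eq:sigma_prop_4} once more gives $\sigma(G^{*}_{-A}) = \sigma(G^{A}_{-A})\,\sigma(G^{B})\,\sigma(G^{0})$ and $\sigma(G^{*}_{-B}) = \sigma(G^{A})\,\sigma(G^{B}_{-B})\,\sigma(G^{0})$, and their product matches exactly.

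The argument is essentially bookkeeping; the only step that requires care is justifying the tripartition of $G^{*}$, i.e.\ checking that no component of $G^{*}$ meets both $A$ and $B$ (which is precisely the defining property of $G^{AB}$) and that $A$ (resp.\ $B$) is therefore disjoint from $V(G^{B}) \cup V(G^{0})$ (resp.\ $V(G^{A}) \cup V(G^{0})$). Once this is stated cleanly, the four $\sigma$-expansions and the final factorization are immediate from Equation \eqref{eq:sigma_prop_4}, so I do not anticipate a real obstacle.
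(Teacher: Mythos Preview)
Your proposal is correct and follows essentially the same route as the paper: both arguments use multiplicativity of $\sigma$ over the split $G = G^{*} \cdotcup G^{AB}$ and the fact that each component of $G^{*}$ meets at most one of $A$, $B$. The only difference is presentational: the paper asserts directly that $G^{*}_{-A} \cdotcup G^{*}_{-B} \cong G^{*} \cdotcup G^{*}_{-A-B}$, whereas your finer tripartition $G^{*} = G^{A} \cdotcup G^{B} \cdotcup G^{0}$ makes this isomorphism (and hence the matching of prefactors) explicit.
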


\begin{proof}
With $G = G^{*} \cdotcup G^{AB}$ and by Equation \eqref{eq:sigma_prop_4} (multiplicativity in components for the number of independent sets) we have
\begin{align*}
\Delta(G, A, B) &= \sigma(G_{-A}) \cdot \sigma(G_{-B}) - \sigma(G) \cdot \sigma(G_{-A-B}) \\
&= \sigma(G^{*}_{-A} \cdotcup G^{AB}_{-A}) \cdot \sigma(G^{*}_{-B} \cdotcup G^{AB}_{-B}) \\
& \eqspace - \sigma(G^{*} \cdotcup G^{AB}) \cdot \sigma(G^{*}_{-A-B} \cdotcup G^{AB}_{-A-B}) \\
&= \sigma(G^{*}_{-A} \cdotcup G^{*}_{-B}) \cdot \sigma(G^{AB}_{-A}) \cdot \sigma(G^{AB}_{-B}) \\
& \eqspace - \sigma(G^{*} \cdotcup G^{*}_{-A-B}) \cdot \sigma(G^{AB}) \cdot \sigma(G^{AB}_{-A-B}).
\end{align*}
By the definition of the graph $G^{*}$, the graph $G^{*}_{-A} \cdotcup G^{*}_{-B}$ is isomorphic to the graph $G^{*} \cdotcup G^{*}_{-A-B}$ and hence their number of independent vertex sets equals. It follows that
\begin{align*}
\Delta(G, A, B) &= \sigma(G^{*}_{-A} \cdotcup G^{*}_{-B}) \cdot [\sigma(G^{AB}_{-A}) \cdot \sigma(G^{AB}_{-B}) - \sigma(G^{AB}) \cdot \sigma(G^{AB}_{-A-B})] \\
&= \sigma(G^{*}_{-A} \cdotcup G^{*}_{-B}) \cdot \Delta(G^{AB}, A, B). \qedhere
\end{align*}
\end{proof}

From this we can derive a corollary for the case $P(G, A, B)$ is infinite.
\begin{coro}
\label{coro:delta_infty}
Let $G = (V, E)$ be a graph and $A, B \subseteq V$ two vertex subsets of $G$. If $P(G, A, B)$ is infinite, then
\begin{align}
\Delta(G, A, B) = 0.
\end{align}
\end{coro}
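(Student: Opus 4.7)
The plan is to derive the corollary directly from Theorem \ref{theo:delta_GAB} by showing that when $P(G,A,B)$ is infinite, the factor $\Delta(G^{AB}, A, B)$ vanishes because $G^{AB}$ is the empty graph.

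First, I would translate the hypothesis into a structural statement about the components of $G$. The assertion that $P(G,A,B)$ is infinite means that there is no $A$-$B$-path in $G$. Two vertices lie in the same connected component if and only if they are joined by a path, so this hypothesis is equivalent to saying that no connected component of $G$ contains a vertex of $A$ and a vertex of $B$ simultaneously. Consequently, the union $G^{AB}$ of all connected components containing at least one vertex from each of $A$ and $B$ is empty, i.e.\ the graph on the empty vertex set.

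Next, I would evaluate $\Delta(G^{AB}, A, B)$ for this empty graph. Since $G^{AB}$ has no vertices, the graphs $G^{AB}_{-A}$, $G^{AB}_{-B}$ and $G^{AB}_{-A-B}$ are all equal to $G^{AB}$ itself, and each is the empty graph, which has exactly one independent set, namely $\emptyset$. Hence $\sigma(G^{AB}) = \sigma(G^{AB}_{-A}) = \sigma(G^{AB}_{-B}) = \sigma(G^{AB}_{-A-B}) = 1$, and therefore
\begin{align*}
\Delta(G^{AB}, A, B) = 1 \cdot 1 - 1 \cdot 1 = 0.
\end{align*}

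Finally, applying Theorem \ref{theo:delta_GAB} yields
\begin{align*}
\Delta(G, A, B) = \sigma(G^{*}_{-A} \cdotcup G^{*}_{-B}) \cdot \Delta(G^{AB}, A, B) = 0,
\end{align*}
which is the claim. There is no real obstacle here; the only point worth stating carefully is the equivalence between the absence of any $A$-$B$-path and the absence of any component meeting both $A$ and $B$, which is immediate from the definition of connected components.
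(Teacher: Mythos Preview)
Your proof is correct and follows essentially the same route as the paper: both argue that the absence of any $A$-$B$-path forces $G^{AB}$ to be empty, so $\Delta(G^{AB},A,B)=0$, and then invoke Theorem~\ref{theo:delta_GAB}. You supply a bit more detail on why $\Delta$ vanishes for the empty graph, which is fine; the only step worth being slightly more careful about is that an $A$-$B$-path is not merely any path between a vertex of $A$ and a vertex of $B$ (it must meet $A$ and $B$ only at its endpoints), so the equivalence you state uses a small shortening argument, but the paper glosses over this as well.
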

\begin{proof}
If $P(G, A, B)$ is infinite, then there is no $A$-$B$-path. Thus, no connected component of $G$ exists with at least one vertex from both vertex subsets $A$ and $B$. Consequently the graph $G^{AB}$ is the empty graph and $\Delta(G^{AB}, A, B) = 0$. By Theorem \ref{theo:delta_GAB} we get $\Delta(G, A, B) = 0$.
\end{proof}

%force UTF8 by äöü

\section{Main theorem}

We use the following property of graphs.
\begin{lemm}
\label{lemm:gmsc_prop}
Let \(G = (V, E)\) be a graph, \(A, B \subseteq V\) two disjoint vertex subsets of \(G\) and \(W \subseteq A\) a subset of \(A\). If \(P(G, A, B)\) is even (odd), then \(P(G_{-A}, N_{G}(W), B)\) is odd (even) or infinite. There is at least one vertex subset $W \subseteq A$, such that \(P(G_{-A}, N_{G}(W), B)\) is not infinite and hence odd (even), namely $W = \{a\}$ where $a \in A$ is connected by an $A$-$B$-path to a vertex $b \in B$.
\end{lemm}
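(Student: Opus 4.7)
The plan is to set up a one-step correspondence between $A$-$B$-paths in $G$ and $N_G(W)$-$B$-paths in $G_{-A}$: prepending a suitable vertex of $W$ to such a path in $G_{-A}$ produces an $A$-$B$-path in $G$ whose length is exactly one greater, and conversely a carefully chosen suffix of an $A$-$B$-path yields an $N_G(\{a\})$-$B$-path. The parity statement and the existence statement then both drop out.

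For the first assertion, I would take an arbitrary $N_G(W)$-$B$-path $Q = (v_1, \ldots, v_k)$ in $G_{-A}$. Since $v_1 \in N_G(W)$, pick some $w \in W$ adjacent to $v_1$ in $G$ and form $P = (w, v_1, \ldots, v_k)$. Then $P$ is an $A$-$B$-path in $G$: the only vertex of $P$ in $A$ is $w$, because $V(Q) \cap A = \emptyset$ (as $Q$ lies in $G_{-A}$) and $w \in W \subseteq A$; and the only vertex in $B$ is $v_k$, because $V(Q) \cap B = \{v_k\}$ and $w \notin B$ by the disjointness of $A$ and $B$. The parity hypothesis on $P(G, A, B)$ then forces the length $k$ of $P$ to be even (odd), so the length $k-1$ of $Q$ is odd (even). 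Applying this to every such $Q$ shows that $P(G_{-A}, N_G(W), B)$ is either empty, i.e.\ infinite, or consists entirely of paths of the opposite parity.

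For the second assertion, since $P(G, A, B)$ is not infinite there is an $A$-$B$-path $P = (u_0, u_1, \ldots, u_\ell)$ in $G$ with $u_0 = a \in A$ and $u_\ell = b \in B$; set $W = \{a\}$. The suffix $(u_1, \ldots, u_\ell)$ avoids $A$ (because $V(P) \cap A = \{a\}$) and starts at $u_1 \in N_G(a)$. The only technical point to watch, which I expect to be the main (and rather minor) obstacle, is that other $u_j$ in the suffix might also belong to $N_G(a)$, violating the $N_G(\{a\})$-$B$-path condition. This is handled by letting $i$ be the largest index with $u_i \in N_G(a)$ and passing to $(u_i, u_{i+1}, \ldots, u_\ell)$: by maximality no later vertex of this shortened path lies in $N_G(a)$, and $u_\ell$ remains its unique vertex in $B$. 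Hence $P(G_{-A}, N_G(W), B)$ is non-empty, and the first assertion then upgrades this to the desired parity.
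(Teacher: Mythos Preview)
Your argument is correct and follows essentially the same route as the paper: both prepend a vertex of $W$ to an $N_G(W)$-$B$-path in $G_{-A}$ to obtain an $A$-$B$-path one edge longer (the paper phrases this by contradiction, you do it directly and verify the $A$-$B$-path conditions explicitly), and both extract an $N_G(a)$-$B$-path from a given $A$-$B$-path for the existence part. Your handling of the second assertion---passing to the last index $i$ with $u_i \in N_G(a)$ to ensure the suffix meets $N_G(a)$ only in its initial vertex---is in fact more careful than the paper, which simply asserts the existence of such a path without addressing this point.
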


\begin{proof}
We show the first part by contradiction. Assume \(P(G, A, B)\) is even (odd) and there is for a subset \(W \subseteq A\) an even (odd) \(N_{G}(W)\)-\(B\)-path in \(G_{-A}\), connecting a vertex \(x \in N_{G}(W)\) with a vertex \(b \in B\). Because \(x \in N_{G}(W)\), there is a vertex \(a \in W \subseteq A\), such that \(a\) and \(x\) are adjacent and hence the path from \(a\) to \(x\) to \(b\) in \(G\) must have odd (even) length, which contradicts the assumption of the statement. 

As \(P(G, A, B)\) is even (odd), there is at least one \(A\)-\(B\)-path \(P\) in \(G\). Thus, there is a vertex $a \in A$ connected by an $A$-$B$-path to a vertex $b \in B$. Consequently there is a \(N_{G}(a)\)-\(B\)-path in \(G_{-A}\), which proves the second part.
\end{proof}

\begin{theo}
\label{theo:gmsc}
\label{theo:gmsc1}
Let \(G = (V, E)\) be a graph and \(A, B \subseteq V\) two vertex subsets of \(G\). Then
\begin{align}
\Delta(G, A, B) &= \sigma(G_{-A}) \cdot \sigma(G_{-B}) - \sigma(G) \cdot \sigma(G_{-A-B}) \notag \\
& \begin{cases}
< 0 & \text{ if } P(G, A, B) \text{ is even,} \\
= 0 & \text{ if } P(G, A, B) \text{ is infinite,} \\
> 0 & \text{ if } P(G, A, B) \text{ is odd.}
\end{cases}
\end{align}
\end{theo}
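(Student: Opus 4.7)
I plan to use strong induction on $|V(G)|$, splitting into three cases according to how $A$ and $B$ interact. For the base case $V(G) = \emptyset$, and more generally whenever $P(G, A, B)$ is infinite (in particular when $A$ or $B$ is empty), Corollary~\ref{coro:delta_infty} already gives $\Delta(G, A, B) = 0$, so only the even and odd situations with $A, B$ both nonempty remain.

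The main case is $A \cap B = \emptyset$ with $A, B$ nonempty. My strategy is to apply Theorem~\ref{theo:sigma_prop_5} twice with the same index set $U = A$: once to $\sigma(G)$, and once to $\sigma(G_{-B})$, where $A$ still lives in $V(G_{-B})$ by disjointness. Both expansions isolate their $W = \emptyset$ term (namely $\sigma(G_{-A})$ and $\sigma(G_{-A-B})$, respectively), so when one forms $\sigma(G_{-A})\sigma(G_{-B}) - \sigma(G)\sigma(G_{-A-B})$ these contributions cancel and the remaining cross products pair off as $\Delta$'s on smaller graphs:
\[
\Delta(G, A, B) = -\sum_{\substack{\emptyset \neq W \subseteq A \\ W \text{ independent}}} \Delta\bigl(G_{-A},\,N_G(W),\,B\bigr),
\]
where $N_G(W)$ is tacitly restricted to $V(G_{-A})$. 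By Lemma~\ref{lemm:gmsc_prop} every inner triple falls into the opposite-parity case or the infinite case, and at least one summand is not infinite. Since $|V(G_{-A})| < |V(G)|$, the induction hypothesis fixes the sign of each inner $\Delta$, and the outer minus sign then produces the sign predicted for $\Delta(G, A, B)$.

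The overlap case $A \cap B \neq \emptyset$ has to be handled separately because Lemma~\ref{lemm:gmsc_prop} requires disjointness. Any $v \in A \cap B$ is itself an $A$-$B$-path of length $0$, so the dichotomy hypothesis forces $P(G, A, B)$ to be even. Pick such a $v$ and apply the single-vertex recurrence~\eqref{eq:sigma_prop_1} only to $\sigma(G)$, while noting that $G_{-A}, G_{-B}, G_{-A-B}$ already have $v$ deleted; this yields
\[
\Delta(G, A, B) = \Delta\bigl(G_{-v},\, A \setminus \{v\},\, B \setminus \{v\}\bigr) - \sigma(G_{-v-N_G(v)}) \cdot \sigma(G_{-A-B}).
\]
Every $(A\setminus\{v\})$-$(B\setminus\{v\})$-path in $G_{-v}$ is still an $A$-$B$-path of $G$ and hence even, and any leftover element of $(A \cap B) \setminus \{v\}$ only contributes further length-$0$ paths, so $P(G_{-v}, A\setminus\{v\}, B\setminus\{v\})$ is even or infinite. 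The induction hypothesis then gives $\Delta(G_{-v},\ldots) \leq 0$, and since the subtracted product is strictly positive, $\Delta(G, A, B) < 0$.

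The main obstacle will be deriving the recursive identity in the disjoint case, i.e.\ verifying that the two applications of Theorem~\ref{theo:sigma_prop_5} can be carried out with a common index set $W \subseteq A$ and that the cross products telescope cleanly into inner $\Delta$'s. The disjointness $A \cap B = \emptyset$ is precisely what makes this bookkeeping work, which is also why the overlap case cannot be absorbed into the same recursion and must be reduced vertex-by-vertex as above.
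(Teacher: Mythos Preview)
Your proof is correct and follows essentially the same route as the paper: induct on $|V(G)|$, dispose of the infinite case via Corollary~\ref{coro:delta_infty}, reduce the overlap case $A\cap B\neq\emptyset$ to a strictly smaller instance, and in the disjoint case expand both $\sigma(G)$ and $\sigma(G_{-B})$ over independent $W\subseteq A$ via Theorem~\ref{theo:sigma_prop_5} so that $\Delta(G,A,B)$ becomes $-\sum_{W}\Delta(G_{-A},N_G(W),B)$, whose summands have the opposite sign by Lemma~\ref{lemm:gmsc_prop} and the induction hypothesis. Your write-up is in fact slightly tidier than the paper's in two spots: you peel off the overlap one vertex at a time (the paper removes all of $C=A\cap B$ at once via Theorem~\ref{theo:sigma_prop_5}), and you correctly treat the disjoint recursion as an \emph{equality} in both parities, whereas the paper needlessly weakens it to an inequality in the odd case (since $N_G(W)\setminus N_{G_{-B}}(W)\subseteq B$ is deleted anyway, the two graphs coincide).
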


\begin{proof}
If \(P(G, A, B)\) is infinite, then \(\Delta(G, A, B) = 0\) by Corollary \ref{coro:delta_infty}. Hence we can assume that \(P(G, A, B)\) is even or odd, that means there is at least one vertex \(a \in A\) connected by an $A$-$B$-path to a vertex $b \in B$. We prove the two cases \(P(G, A, B)\) is even and \(P(G, A, B)\) is odd by induction with respect to the number of vertices in \(G\), denoted by \(n(G)\).

For the basic step we assume a graph \(G\) with the minimal number of vertices, this is \(n(G) = 1\) if \(P(G, A, B)\) is even and \(n(G) = 2\) if \(P(G, A, B)\) is odd.

For \(P(G, A, B)\) is even and \(n(G) = 1\) we have \(G = (\{a\}, \emptyset)\) and \(A = B = \{a\}\). Hence
\begin{align*}
\Delta(G, A, B) &= \sigma(G_{-A}) \cdot \sigma(G_{-B}) - \sigma(G) \cdot \sigma(G_{-A-B}) < 0.
\end{align*}

For \(P(G, A, B)\) is odd and \(n(G) =  2\) we have \(G = (\{a,b\}, \{\{a,b\}\})\) and \(A = \{a\}\), \(B = \{b\}\). Hence
\begin{align*}
\Delta(G, A, B) &= \sigma(G_{-A}) \cdot \sigma(G_{-B}) - \sigma(G) \cdot \sigma(G_{-A-B}) > 0.
\end{align*}

We assume as induction hypothesis that the statement holds for any graph with at most \(k\) vertices and consider from now on a graph \(G\) with \(n(G) = k+1\) vertices. 

In case \(P(G, A, B)\) is even, we have to distinguish whether \(A \cap B = C\) is empty or not. If \(P(G, A, B)\) is even and \(C \neq \emptyset\), we have
\begin{align*}
\Delta(G, A, B) &= \sigma(G_{-A}) \cdot \sigma(G_{-B}) - \sigma(G) \cdot \sigma(G_{-A-B}) \\
&= \sigma(G_{-A}) \cdot \sigma(G_{-B}) - \sum_{\mathclap{\substack{W \subseteq C \\ W \text{ is independent}}}}{\sigma(G_{-C-N_{G}(W)})} \cdot \sigma(G_{-A-B}) \\
&= \sigma(G_{-A}) \cdot \sigma(G_{-B}) - \sigma(G_{-C}) \cdot \sigma(G_{-A-B}) \\
& \eqspace - \sum_{\mathclap{\substack{\emptyset \subset W \subseteq C \\ W \text{ is independent}}}}{\sigma(G_{-C-N_{G}(W)})} \cdot \sigma(G_{-A-B}) \\
& < \sigma(G_{-A}) \cdot \sigma(G_{-B}) - \sigma(G_{-C}) \cdot \sigma(G_{-A-B}) \\
&= \sigma(G_{-C-(A \setminus C)}) \cdot \sigma(G_{-C-(B \setminus C)}) - \sigma(G_{-C}) \cdot \sigma(G_{-C-(A \setminus C) - (B \setminus C)}) \\
&= \Delta(G_{-C}, A \setminus C, B \setminus C).
\end{align*}
Because $P(G_{-C}, A \setminus C, B \setminus C)$ is either even or infinite and the number of vertices in \(G_{-C}\) is at most \(k\), we can apply the induction hypothesis and thus
\begin{align*}
\Delta(G, A, B) &< \Delta(G_{-C}, A \setminus C, B \setminus C) \leq 0.
\end{align*}

Otherwise, if \(P(G, A, B)\) is even and \(C = \emptyset\), we have
\begin{align*}
\Delta(G, A, B) &= \sigma(G_{-A}) \cdot \sigma(G_{-B}) - \sigma(G) \cdot \sigma(G_{-A-B}) \\
&= \sigma(G_{-A}) \cdot \sum_{\mathclap{\substack{W \subseteq A \\ W \text{ is independent}}}}{\sigma(G_{-B-A-N_{G_{-B}}(W)})} - \sum_{\mathclap{\substack{W \subseteq A \\ W \text{ is independent}}}}{\sigma(G_{-A-N_{G}(W)})} \cdot \sigma(G_{-A-B}) \\
&= \phantom{-} \sum_{\mathclap{\substack{W \subseteq A \\ W \text{ is independent}}}}{\left[ \sigma(G_{-A}) \cdot \sigma(G_{-B-A-N_{G_{-B}}(W)}) - \sigma(G_{-A-N_{G}(W)}) \cdot \sigma(G_{-A-B}) \right]}.
\end{align*}
No vertex of \(A\) is adjacent to a vertex of \(B\), because \(P(G, A, B)\) is even. Consequently, the sets \(N_{G_{-B}}(W)\) and \(N_{G}(W)\) coincide for all \(W \subseteq A\) and hence
\begin{align*}
\Delta(G, A, B) &= \phantom{-} \sum_{\mathclap{\substack{W \subseteq A \\ W \text{ is independent}}}}{\left[\sigma(G_{-A}) \cdot \sigma(G_{-A-B-N_{G}(W)}) - \sigma(G_{-A-N_{G}(W)}) \cdot \sigma(G_{-A-B})\right]} \\
&= - \sum_{\mathclap{\substack{W \subseteq A \\ W \text{ is independent}}}}{\left[ \sigma(G_{-A-N_{G}(W)}) \cdot \sigma(G_{-A-B}) - \sigma(G_{-A}) \cdot \sigma(G_{-A-N_{G}(W)-B}) \right]} \\
&= - \sum_{\mathclap{\substack{W \subseteq A \\ W \text{ is independent}}}}{\Delta(G_{-A}, N_{G}(W), B)}.
\end{align*}
For each vertex subset \(W \subseteq A\), the number of vertices in \(G_{-A}\) is at most \(k\), hence we can use the induction hypothesis. By Lemma \ref{lemm:gmsc_prop} and because \(P(G, A, B)\) is even, for each \(W \subseteq A\) we have \(P(G_{-A}, N_{G}(W), B)\) is either infinite and 
\begin{align*}
\Delta(G_{-A}, N_{G}(W), B) &= 0,
\end{align*}
or \(P(G_{-A}, N_{G}(W), B)\) is odd and 
\begin{align*}
\Delta(G_{-A}, N_{G}(W), B) &> 0,
\end{align*}
where the last occurs at least once for \(W = \{a\}\). Thus
\begin{align*}
\Delta(G, A, B) &< 0.
\end{align*}

In case \(P(G, A, B)\) is odd, we have
\begin{align*}
\Delta(G, A, B) &= \sigma(G_{-A}) \cdot \sigma(G_{-B}) - \sigma(G) \cdot \sigma(G_{-A-B}) \\
&= \sigma(G_{-A}) \cdot \sum_{\mathclap{\substack{W \subseteq A \\ W \text{ is independent}}}}{\sigma(G_{-B-A-N_{G_{-B}}(W)})} - \sum_{\mathclap{\substack{W \subseteq A \\ W \text{ is independent}}}}{\sigma(G_{-A-N_{G}(W)})} \cdot \sigma(G_{-A-B}) \\
&= \phantom{-} \sum_{\mathclap{\substack{W \subseteq A \\ W \text{ is independent}}}}{\left[ \sigma(G_{-A}) \cdot \sigma(G_{-B-A-N_{G_{-B}}(W)}) - \sigma(G_{-A-N_{G}(W)}) \cdot \sigma(G_{-A-B}) \right]}.
\end{align*}
The relation \(N_{G_{-B}}(W) \subseteq N_{G}(W)\) implies (as a consequence of Equation \eqref{eq:sigma_prop_1}) that \(\sigma(G_{-B-A-N_{G_{-B}}(W)}) \geq \sigma(G_{-A-B-N_{G}(W)})\), hence
\begin{align*}
\Delta(G, A, B)
& \geq \phantom{-} \sum_{\mathclap{\substack{W \subseteq A \\ W \text{ is independent}}}}{\left[ \sigma(G_{-A}) \cdot \sigma(G_{-A-B-N_{G}(W)}) - \sigma(G_{-A-N_{G}(W)}) \cdot \sigma(G_{-A-B}) \right]} \\
&= - \sum_{\mathclap{\substack{W \subseteq A \\ W \text{ is independent}}}}{\left[ \sigma(G_{-A-N_{G}(W)}) \cdot \sigma(G_{-A-B}) - \sigma(G_{-A}) \cdot \sigma(G_{-A-B-N_{G}(W)}) \right]} \\
&= - \sum_{\mathclap{\substack{W \subseteq A \\ W \text{ is independent}}}}{\Delta(G_{-A}, N_{G}(W), B)}.
\end{align*}
For each vertex subset \(W \subseteq A\), the number of vertices in \(G_{-A}\) is at most \(k\), hence we can use the induction hypothesis. By Lemma \ref{lemm:gmsc_prop} and because \(P(G, A, B)\) is odd, for each \(W \subseteq A\) we have \(P(G_{-A}, N_{G}(W), B)\) is either infinite and 
\begin{align*}
\Delta(G_{-A}, N_{G}(W), B) &= 0,
\end{align*}
or \(P(G_{-A}, N_{G}(W), B)\) is even and 
\begin{align*}
\Delta(G_{-A}, N_{G}(W), B) &< 0,
\end{align*}
where the last occurs at least once for \(W = \{a\}\). Thus
\begin{align*}
\Delta(G, A, B) &> 0. \qedhere
\end{align*}
\end{proof}

If two vertices have even (odd) distance in a bipartite graph, then all paths have even (odd) length and hence the previous theorem proves the Merrifield-Simmons conjecture (Conjecture \ref{conj:msc}) for bipartite graphs:
\begin{coro}
\label{coro:msc_biparite_graph}
Let \(G=(V, E)\) be a bipartite graph and \(u, v \in V\) two vertices of \(G\). Then
\begin{align}
\Delta(G, u, v) &= \sigma(G_{-u}) \cdot \sigma(G_{-v}) - \sigma(G) \cdot \sigma(G_{-u-v}) \notag \\
&\begin{cases}
< 0 & \text{if } d_G(u, v) \text{ is even,} \\
= 0 & \text{if } d_G(u, v) \text{ is infinite,} \\
> 0 & \text{if } d_G(u, v) \text{ is odd.}
\end{cases}
\end{align}
\end{coro}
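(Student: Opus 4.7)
The plan is to derive the corollary as a direct specialization of Theorem \ref{theo:gmsc} by taking the singleton vertex subsets $A = \{u\}$ and $B = \{v\}$. With this choice, $\Delta(G, A, B)$ coincides with the original $\Delta(G, u, v)$, so the only thing I need to translate is the trichotomy criterion: the sign of $\Delta(G, A, B)$ depends on the parity of $P(G, \{u\}, \{v\})$, while the statement of the corollary is phrased in terms of the distance $d_G(u,v)$.

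The key step is therefore the observation that in a bipartite graph, the parity of the length of every $u$-$v$-path is the same, namely it matches the parity of $d_G(u,v)$. This is standard: if $G$ is bipartite with bipartition $V = V_1 \cdotcup V_2$, then every path in $G$ alternates between $V_1$ and $V_2$, so the parity of its length is completely determined by whether its endpoints lie in the same part or in different parts; in particular two $u$-$v$-paths cannot have different parities (otherwise concatenating one with the reverse of the other yields an odd closed walk, hence an odd cycle, contradicting bipartiteness). Consequently, $P(G, \{u\}, \{v\})$ is even (respectively odd) exactly when $d_G(u,v)$ is even (respectively odd), and it is infinite exactly when no $u$-$v$-path exists, i.e. when $d_G(u,v) = \infty$.

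Once this parity identification is in place, the three cases of the corollary drop out immediately from the three cases of Theorem \ref{theo:gmsc}. I do not anticipate any real obstacle: the whole work of the paper has been absorbed into Theorem \ref{theo:gmsc}, and what remains is only the routine bipartite-graph fact that all paths joining a fixed pair of vertices share parity. The proof should therefore fit into a short paragraph, essentially: set $A = \{u\}$, $B = \{v\}$, invoke bipartiteness to equate the parity of $P(G, A, B)$ with that of $d_G(u,v)$, and apply Theorem \ref{theo:gmsc}.
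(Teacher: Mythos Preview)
Your proposal is correct and matches the paper's own derivation exactly: the paper also just sets $A=\{u\}$, $B=\{v\}$, notes that in a bipartite graph all $u$--$v$--paths share the parity of $d_G(u,v)$, and then reads off the three cases from Theorem~\ref{theo:gmsc}. The paper's justification is in fact a single sentence, so your paragraph-length explanation of the parity fact is already more detailed than necessary.
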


Consequently, Corollary \ref{coro:msc_biparite_graph} remains valid for graphs obtained from bipartite graphs by attaching arbitrary additional graphs, which are connected to exactly one vertex of the bipartite graph.

%force UTF8 by äöü

\end{document}